\DeclareMathOperator{\Aut}{Aut}
\DeclareMathOperator{\Fix}{Fix}
\DeclareMathOperator{\ecl}{ecl}
\DeclareMathOperator{\etd}{etd}
\newcommand{\exptransdeg}{exponential transcendence degree}
\newtheorem*{prop}{Proposition}
\newtheorem*{lemma}{Lemma}
\theoremstyle{definition}
\newtheorem*{definition}{Definition}
\newcommand{\CB}{{\mathbb B}}
\newcommand{\DC}{{\mathbb C}}
\newcommand{\DR}{{\mathbb R}}
\newcommand{\DQ}{{\mathbb Q}}
\newcommand{\DZ}{{\mathbb Z}}
\newcommand{\DG}{{\mathbb G}}
\newcommand{\Qab}{{\mathbb Q^{\mathrm{ab}}}}
\newcommand{\QabR}{{\mathbb Q^{\mathrm{rab}}}}
\newcommand{\Qalg}{{\mathbb Q^{\mathrm{alg}}}}
\newcommand{\Ker}{{\text{Ker}}}
\newcommand{\E}{{\text{E}}}
\newcommand{\sub}{\subseteq}
\newcommand{\strong}{\lhd}
\newcommand{\td}{\mathrm{trans.deg.}}
\newcommand{\ldim}{\mathrm{lin.dim.}}
\renewcommand{\phi}{\varphi}
\long\def\symbolfootnote[#1]#2{\begingroup%
\def\thefootnote{\fnsymbol{footnote}}\footnote[#1]{#2}\endgroup}
\def\Ind#1#2{#1\setbox0=\hbox{$#1x$}\kern\wd0\hbox to 0pt{\hss$#1\mid$\hss}
\lower.9\ht0\hbox to 0pt{\hss$#1\smile$\hss}\kern\wd0}
\def\Notind#1#2{#1\setbox0=\hbox{$#1x$}\kern\wd0\hbox to 0pt{\mathchardef
\nn=12854\hss$#1\nn$\kern1.4\wd0\hss}\hbox to
0pt{\hss$#1\mid$\hss}\lower.9\ht0 \hbox to
0pt{\hss$#1\smile$\hss}\kern\wd0}
\begin{document}

\title[Algebraic numbers definable in exponential fields]{The algebraic numbers definable in various exponential fields}
\author{Jonathan Kirby \and Angus Macintyre \and Alf Onshuus }
\maketitle

\section{Introduction}

An exponential field (or E-field) is a field, $F$,
of characteristic 0, together with $\E: F\rightarrow F$ satisfying
\begin{itemize}
\item $\E(0)=1$

\item $\E(x+y)=\E(x)\cdot \E(y)$.
\end{itemize}
Every mathematician knows the classical E-fields $\DR$ and
$\DC$. There are also the $LE$-series (see \cite{DMM}), and the
surreal numbers \cite{G}.

More recently, Zilber has produced beautiful ``complex'' examples \cite{Z}. In $\DC$, the kernel of the
exponential map is $2\pi i \DZ$, an infinite cyclic group. In addition, $\DC$ is algebraically closed,
and its exponential map is surjective. Zilber considered E-fields with these properties, which also satisfy
the conclusion of Schanuel's conjecture (see \ref{strong} below), and which are
\emph{strongly exponentially-algebraically closed}, an analogue of being algebraically closed, but taking into
account the exponentiation (see 3.4 below). In this paper we call such E-fields \emph{Zilber fields}. (Other papers use this name
for slightly larger or smaller classes of exponential fields, but the distinction is not important for our purposes.)
There is an excellent exposition of these E-fields by Marker \cite{Marker06}, and a detailed exposition in \cite{K1}.

The complex exponential field $\DC$ also has the property that for any countable subset $X \subseteq \DC$,
there are only countably many $a \in \DC$ which are exponentially algebraic over $X$. This is the
\emph{countable closure property (CCP)} (see \ref{closure} below, or \cite{K2} for more details of exponential
algebraicity). Zilber proved the dramatic result that there is a unique Zilber field (we call it $\CB$) of
cardinality $2^{\aleph_0}$, which satisfies the countable closure property. He has made the profoundly
explanatory conjecture that $\CB \cong \DC$.

Much is known about the logic of these examples. The real E-field
$\DR$, the $LE$-series field, and the surreal numbers are
elementarily equivalent E-fields (\cite{DMM}, \cite{DE}, and
\cite{M1}). They are model-complete, and decidable if Schanuel's
conjecture is true (\cite{W}, \cite{MW}).

It follows from G\"odel's incompleteness theorem that $\DC$ is
undecidable (see e.g. \cite{T}), and it is not model-complete
(\cite{M2}, \cite{Marker06}). The same undecidability argument
works for Zilber's E-fields, and a different argument shows the
failure of model-completeness \cite{K1}.

In this paper we consider, for each example above, the issue of
which algebraic numbers are pointwise definable. For the real
cases the problem is trivial, since one already knows that in
their pure field theory one can define all real algebraic numbers
\cite{T}. The same question (understanding the pointwise definable
points) for the complex exponential field had already been asked
by Mycielski.

In the ``complex'' cases the notion of real abelian algebraic
number is central (see \ref{QabR} below). The main theorems are:

\noindent {\bf Theorem 1}:  For any E-field with cyclic kernel, in particular $\DC$ or the Zilber fields,
all real abelian algebraic numbers are pointwise definable.
\\
{\bf Theorem 2}: For the Zilber fields, the only pointwise definable
algebraic numbers are the real abelian numbers.


\medskip

The conjecture of Zilber above is one of two main open questions
around the complex exponential field, the other being whether the
real subfield is (setwise) definable. They cannot both have a
positive answer, as can be seen for example from Theorem 2. One
step towards Zilber's conjecture would be to show that Theorem 2
holds for the complex exponential field. One might hope this would
be easier than the full conjecture, but we have not been able to
prove it even assuming Schanuel's Conjecture.

\section{Defining the real abelian numbers}\label{Section1}

\subsection{$\QabR$}\label{QabR}
In this section we consider E-fields $F$ where $\Ker:=\{x\in F :
\E(x)=1\}$ is an infinite cyclic group. Let $\tau$ and $-\tau$ be
the generators.

Note that $\left\{ \E\left(\frac{j\tau}{n}\right) : j=0,\dots,
n-1\right\}$ are distinct $n$th roots of 1, so $F\supset U$, the
group of \emph{all} roots of unity. Thus

\[
F\supset \DQ^{ab}=\DQ(U)=\DQ[U],
\]
the maximal abelian extension of $\DQ$. Let $\Qalg$ be the
field-theoretic algebraic closure of $\DQ$ (as an abstract field).

It is important to note that in no algebraically closed field $F$
of characteristic 0 is there a unique subfield $L\subsetneq F$
with $F=L(i)$. It follows by Artin-Schreier theory (see \cite{Ja})
that there always is at least one such $L$. For if $F$ has
transcendence degree $\kappa$ over $\DQ$, pick a transcendence
basis $B$ over $\DQ$ of cardinality $\kappa$ and let $L$ be a
maximal formally real extension of $\DQ(B)$ in $F$. $L$ will be
real-closed. Indeed, by Artin-Schreier, if $F$ is a finite proper
extension of any subfield $L'$, then $F=L'(i)$ and $L'$ is real
closed. Note too that $L=\Fix(\sigma)$, where $\sigma$ is an
involution of $\Aut(F)$. Conversely, the fixed field of any
involution of $F$ is a field $L$ with $F=L(i)$.

An elaboration of such arguments naturally gives an isomorphism
between conjugacy classes of involutions of $\Aut(F)$ and
isomorphism types of real-closed fields of transcendence degree
$\kappa$ over $\DQ$.

Let us apply these ideas to $L=\Qalg$. Any $K$ with $L=K(i)$ is
isomorphic to the field of real algebraic numbers, so there is
just one conjugacy class of involutions in $\Aut(\Qalg)$. There
are, however, $2^{\aleph_0}$ many involutions in this conjugacy
class. This is because $\DQ\left(\left\{\sqrt{p}\ |\  p \text{
prime} \right\}\right)$ has $2^{\aleph_0}$ different orderings
(you can choose, independently for each $p$, which $\sqrt p$ is
positive), and the corresponding real closures are distinct (but
isomorphic). For example, pick a $\sqrt p$. In some real closures
this will be a square, in others $-\sqrt p$ will be a square.

Finally, note that all restrictions to $\Qab$ of involutions in
$\Aut(\Qalg)$ will be the same involution $\sigma_0$,
characterized by $\sigma_0(x)=x^{-1}$ for all $x\in U$. We call
the elements of $\Fix(\sigma_0)$ the \emph{real abelian numbers},
and write $\QabR$ for $\Fix(\sigma_0)$. We will prove in
\ref{subsection1.7} that every element in $\QabR$ is a rational
combination of special values of the cosine function, which are
totally real, so $\QabR$ is totally real. This implies that it is
included in any maximal formally real subfield of $\Qab$. Now
$\QabR$ has only the one extension in $\Qab$, and that is not
formally real, so $\QabR$ can alternatively be characterized as
the unique maximal formally real subfield of $\Qab$, or as the
intersection of $\Qab$ with the field $\DQ^{\mathrm{tr}}$ of totally real numbers.

\subsection{Defining $\DZ$} We can define $\DZ$ as

\[
\{y : \forall x [E(x)=1 \to E(yx) =1]\},
\]
a $\forall$-definition.

We can define $\DQ$ as
\[
\{y : (\exists z,w \in \Ker)[z=wy]\},
\]
an $\exists$-definition.

In $\DC$, there is also an $\exists$-definition of $\DZ$ given by Laczkovich \cite{L}. He used the idea
\[ x\in \DZ \ \Leftrightarrow\  (x\in \DQ \wedge 2^x\in \DQ) \]
but one has to pay attention to the ambiguity in $2^x$, and, in the
general case, to the \emph{existence} of logarithms. Consider the
formula $\Theta(x)$ defined by
\[
\exists t\left[ \E(t)=2 \wedge \E(xt)\in \mathbb Q \wedge x\in
\DQ\right].
\]

\begin{lemma}
Suppose $F\models (\exists t)[\E(t)=2]$. Then
$F\models \Theta(x)$ if and only if $x\in \mathbb Z$.
\end{lemma}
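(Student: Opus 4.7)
The plan is to prove the two directions separately, exploiting the homomorphism property of $\E$ to reduce the whole question to a statement about rational powers of $2$.

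For the easy direction ($\Leftarrow$), suppose $x \in \DZ$. By hypothesis there is $t \in F$ with $\E(t) = 2$, and then $\E(xt) = \E(t)^x = 2^x$ makes sense since $x$ is an integer, and clearly $2^x \in \DQ$ whether $x$ is positive, zero, or negative. Since also $x \in \DQ$, the witness $t$ itself shows $F \models \Theta(x)$.

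For the nontrivial direction ($\Rightarrow$), suppose $F \models \Theta(x)$, and fix a witness $t$ with $\E(t) = 2$ and $\E(xt) \in \DQ$, noting that we already have $x \in \DQ$. Write $x = p/q$ with $\gcd(|p|,q) = 1$ and $q \geq 1$; the goal is to show $q = 1$. The point is that raising $\E(xt)$ to the $q$-th power absorbs the ambiguity in $t$: we compute
\[
\E(xt)^q \;=\; \E(qxt) \;=\; \E(pt) \;=\; \E(t)^p \;=\; 2^p,
\]
so $\E(xt)$ is a \emph{rational} $q$-th root of $2^p$. A standard unique-factorization argument in $\DZ$ then shows that such a root can only exist if $q \mid p$, and since $\gcd(|p|,q) = 1$ this forces $q = 1$, hence $x \in \DZ$.

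The only step that is not purely formal is the elementary number theory at the end: writing $\E(xt) = a/b$ in lowest terms and comparing $\ell$-adic valuations on both sides of $a^q = 2^p b^q$ (or $a^q 2^{-p} = b^q$ if $p < 0$) to conclude $b = 1$ and $q \mid p$. This is the main (and only) real content of the argument, and it is a routine consequence of unique factorization in $\DZ$. Notably, the proof does not require the cyclic kernel hypothesis of Section~\ref{Section1}; it only uses that $\E$ is a group homomorphism from $(F,+)$ to $(F^\times,\cdot)$.
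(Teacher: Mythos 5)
Your proof is correct and follows essentially the same route as the paper's: the forward direction rests on the computation $\E(xt)^q = 2^p$ (the paper writes $\E(xt)^n = 2^m$) followed by the observation that a rational $q$-th root of $2^p$ forces $q \mid p$, and the converse is the same one-line verification. The only difference is that you spell out the unique-factorization step that the paper leaves implicit.
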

\begin{proof}
Suppose $F\models \Theta(x)$. Then $x=\frac{m}{n}$ with $m,n\in
\DZ, n>0$. Then $\E(xt)=\E\left(\frac{mt}{n}\right)$ and
$\E(xt)^n=2^m$. But $\E(xt)\in \DQ$, so $\frac{m}{n}\in \DZ$, that
is, $x\in \DZ$.

Conversely, suppose $x\in \DZ$, and $\E(t)=2$. Then $\E(tx)=2^x\in
\DQ$.
\end{proof}
Thus if 2 has a logarithm in $F$, $\DZ$ has a
$\exists$-definition. A similar argument works if any prime number
has a logarithm.

\subsection{Defining $\{\tau, -\tau\}$} This two element set
is defined by

\[
x\in \{\tau, -\tau\} \Leftrightarrow \left(\left(x \in
\Ker\right) \wedge \left(\left(\forall y \in \Ker\right)
\left(\exists n\in \DZ\right)
\left[nx=y\right]\right)\right).
\]

The complexity of this definition is $\forall \exists \forall$ for
a general $F$, but only $\forall \exists$ if some prime has a
logarithm.

\subsection{Sine and cosine} We are not able to distinguish $i$
from $-i$ in the complex exponential case. But we can define
cosine and sine there, and the same definitions make sense in any exponential field in which $-1$ is a square, namely:
$$
\begin{array}{lcl}
\cos(x)=y & \Leftrightarrow & \left(\exists j\right)\left[j^2=-1
\wedge
y=1/2\left(\E\left(jx\right)+\E\left(-jx\right)\right)\right] \\
& \Leftrightarrow & \left(\forall j\right)\left[j^2=-1 \rightarrow
y=1/2\left(\E\left(jx\right)+\E\left(-jx\right)\right)\right]
\end{array}
$$
and
$$
\begin{array}{lcl}
\sin(x)=y & \Leftrightarrow & \left(\exists j\right)\left[j^2=-1
\wedge
y=1/2j\left(\E\left(jx\right)-\E\left(-jx\right)\right)\right] \\
& \Leftrightarrow & \left(\forall j\right)\left[j^2=-1 \rightarrow
y=1/2j\left(\E\left(jx\right)-\E\left(-jx\right)\right)\right].
\end{array}
$$

Thus the graphs of cosine and sine are both $\exists$- and
$\forall$-definable. The standard results of elementary
trigonometry are easily proved (just using that $\E$ is a homomorphism), for example:

\begin{enumerate}[i.]
\item $\cos(-x)=\cos(x)$;

\item $\sin(-x)=-\sin(x)$ ;

\item if $j^2=-1$,   $\{x :\sin(x)=0\}=\frac{1}{2j}\Ker$;

\item if $j^2=-1$, $\{x :\cos(x)=0\} =
\left(\frac{1}{4j}\Ker\smallsetminus
\left(\frac{1}{2j}\Ker\right)\right)$ ;

\item if $j^2=-1$, exactly one of $\sin(\alpha/4j)$ and $\sin(-\alpha/4j)$ is 1 and the other is -1 for any $\alpha\in
\Ker\smallsetminus 2\Ker$.
\end{enumerate}

\subsection{Defining $\pi$} We give a definition which is correct
for the complex exponential, and has an unambiguous meaning for any exponential field with cyclic kernel.

From the definition of $\{\tau, -\tau\}$ we get an unambiguous
definition of $\{\frac{\tau}{2j}, \frac{-\tau}{2j}\}$ for any $j$ such that $j^2=-1$.
Think of this two element set as $\{\pi, -\pi\}$ and define $\pi$
as the unique element $t$ of this set with $\sin(t/2)=1$. The other
element is then $-\pi$.

\subsection{Separating $\pm\sqrt{2}$ (for example)}

$\sqrt 2=2\frac{1}{\sqrt 2}$, and $\cos\left(\frac{\pi}{4}\right) = +\frac{1}{\sqrt
2}$, at least in $\DC$. We \emph{define}
in general $+\sqrt{2}=2\cos\left(\frac{\pi}{4}\right)$.

\subsection{Pointwise definition of elements of $\QabR$}\label{subsection1.7}

Let $\alpha\in \Qab$. Then $\alpha\in \DQ[U]$, so it can be expressed as a finite sum:
\[
\alpha=\sum r_n \E\left(s_n\tau\right),
\]
with $r_n\in \DZ, \ s_n\in \DQ$.

Recall that $\sigma_0$ is the involution in $\Aut(\Qab)$ characterized by $\sigma_0(x) = x^{-1}$ for all $x \in U$. Then if $\alpha\in \QabR$ we have
\[
\alpha=\left(\alpha+\sigma_0\left(\alpha\right)\right)/2= \sum r_n
\left(\frac{\E\left(s_n\tau\right)+\E\left(-s_n\tau\right)}{2}\right) =\sum r_n \cos(2\pi s_n)
\]
which is clearly pointwise definable. This proves Theorem 1.

\section{The other direction: Zilber fields.}

\subsection{Partial exponential fields}

It is convenient to consider subfields of an exponential field
which are not closed under exponentiation. Thus we define a
\emph{partial exponential field} to be a field $F$ (of
characteristic zero) together with a $\DQ$-linear subspace $D(F)$
of $F$ and a map $\E: D(F) \to F$ which satisfies
\begin{itemize}
\item $\E(0)=1$ \item $\E(x+y)=\E(x)\cdot \E(y)$.
\end{itemize}
If $F$ is a partial exponential field then we say it is
\emph{generated} by a subset $X$ if and only if $X \cap D(F)$
spans $D(F)$ and $F$ is generated as a field by $X \cup \E(D(F))$.
In particular, we have the notion of $F$ being
\emph{finitely-generated} if a finite such $X$ exists.

An \emph{embedding} of partial exponential fields $\phi: F \to K$
is a field embedding such that, given any $\alpha,\beta\in F$, if
$E_F(\alpha)=\beta$ then $E_K(\phi(\alpha))=\phi(\beta)$. We will
say that $F$ is a partial exponential subfield of $K$ if it is a
subset and the inclusion map is an embedding of partial
exponential fields. Notice that $\mathbb Q$ with
$D(\mathbb Q)=\{0\}$ is a partial exponential subfield of
every partial exponential field. We call it $\DQ_0$.

For another example, consider the subfield $SK = \Qab(2\pi i)$ of
$\DC$, with $D(SK) = \DQ \cdot 2\pi i$, and the restriction of the
complex exponential map. ($SK$ stands for \emph{standard kernel}.)
Then $SK$ is generated as a partial exponential field by the
single element $2\pi i$ because $\E(D(SK)) = U$, the roots of
unity. Clearly $SK$ is not finitely-generated as a pure field,
because it contains $\Qab$.

\subsection{Strong embeddings and Schanuel's Conjecture}\label{strong}

Suppose $F$ is any exponential field, $F_0$ is a partial
exponential subfield of $F$ and let $Y\subset F$.

We will denote by $\td(Y/F_0)$ the (algebraic) transcendence
degree of the field extension $F_0(Y)/F_0$ and by $\ldim_\DQ(X/Y)$
the (linear) dimension of the $\DQ$-vector space spanned by $X
\cup Y$, quotiented by the subspace spanned by $Y$.

We say that $F_0$ is \emph{strongly embedded} in $F$, and write
$F_0 \strong F$ if and only if for every finite subset $X \sub F$
we have
\[\td(X,\E(X)/F_0) \geqslant \ldim_\DQ(X/D(F_0)).\] For example, $\DR$ is not strongly embedded in
$\DC$, because, taking $X = \{i\}$, we have $\td(i,e^i/\DR) = 0$
and $\ldim_\DQ(i/\DR) = 1$.

\medskip

We will say that a partial $E$-field $F$ satisfies the
\emph{Schanuel Condition} (\emph{SC}) if, whenever $\alpha_1,\dots,
\alpha_n$ in $F$ are $\DQ$-linearly independent, the transcendence
degree of $\DQ(\alpha_1, \dots, \alpha_n,E(\alpha_1),
\dots,E(\alpha_n))$ over $\DQ$ is greater than or equal to $n$.
This is equivalent to saying that, for any finite $X \sub F$,
\[\td(X,\E(X)/\DQ) \geqslant \ldim_\DQ(X/0),\]
so it can be equivalently stated as $\DQ_0 \strong F$ (where, as
mentioned before, $\DQ_0$ is the partial $E$-field $\DQ$ with
trivial exponential domain).

The Schanuel condition implies that any nonzero kernel element is transcendental over $\DQ$,
something which is not always true in exponential fields (see Section \ref{otherfields} for some examples).
If $F$ is an exponential field with cyclic kernel which satisfies SC, then the rules of exponentiation
constrain the behaviour of $E$ so strongly that one can find a
embedding of $SK$ into $F$. This embedding is unique modulo
sending $2\pi i$ to either $\tau$ or $-\tau$, so we will identify
the image of the embedding with $SK$ itself (thus identifying
$\tau$ with $2\pi i$). The Schanuel condition then implies that $SK
\strong F$.

Schanuel's conjecture for $\DC$ is equivalent to the statement that the complex exponential field $\DC$ satisfies the Schanuel condition. It can easily be shown that Schanuel's conjecture is also equivalent to the assertion that $SK \strong \DC$.

\subsection{Exponential algebraic closure and CCP}\label{closure}

Given any exponential field $F$ satisfying the Schanuel condition and any finite $X\subset F$ the
function
\[\delta_F(X):=\td(X,\E(X)/\DQ)-\ldim_\DQ(X/0)\]
is always greater than or equal to 0. Now, it may happen that
$\delta_F(X)>\delta_F(X\cup Y)$ but there can be no infinite
descent, so we can define $\etd_F(X)$, the \emph{\exptransdeg} of $X$ in $F$, to be the minimum of $\delta_F(X_1)$
where $X_1\supset X$. (Both $X_1$ and $X$ are assumed to be finite
subsets of $F$.)

For any finite $X\subset F$ we define the \emph{exponential algebraic
closure of $X$ with respect to $F$}, denoted $\ecl_F(X)$, to
be the set of all elements $c$ such that $\etd_F(\{c\}\cup X)=\etd_F(X)$. For infinite $X$, we define $\ecl_F(X) = \bigcup \{\ecl_F(X_0)\mid X_0 \subseteq X \mbox{, finite}\}$.
 We say that $F$ satisfies the
countable closure property (CCP) if the closure $\ecl_F(X)$ of
every countable subset $X\subset F$ is countable. Notice that
given any $X\subset F$, the exponential algebraic closure of $X$ in $F$ is
an exponential field.
The reader may care to look at \cite{K2} for an approach which
does not rely on the Schanuel condition.

\subsection{Definition of Zilber fields}

Recall that in the introduction we defined Zilber fields as
E-fields which are algebraically closed fields with standard
kernel, surjective exponential map, which also satisfy the
conclusion of Schanuel's conjecture, and which are \emph{strongly
exponentially-algebraically closed} -- a notion which we have not
defined yet. We now give the definition for the sake of
completeness, although we do not use it directly in the paper.

\medskip

Let $F$ be any exponential field, let $K$ be a subfield of $F$,
and let $\alpha_1, \dots, \alpha_n\in F$. Suppose (all other cases
reduce to this) that the $\DQ$-linear dimension of $\{\alpha_1, \dots,
\alpha_n\}$ is $n$. Let $V(x_1,\dots, x_n, y_1, \dots, y_n)$ be
the algebraic locus of $(\alpha_1, \dots, \alpha_n,\E(\alpha_1), \dots,
\E(\alpha_n))$ over $K$. Then
$(\alpha_1, \dots, \alpha_n, \E(\alpha_1), \dots, \E(\alpha_n))$
is a generic point of $V$ over $K$. Moreover, $V$ is a
subvariety of $(\DG_a)^n\times (\DG_m)^n$.

Now, $V$ has some special algebro-geometric properties.
Firstly, the $x$ coordinates of a generic point are $\DQ$-linearly
independent. Secondly, any ``monomial'' relation
$\E(\alpha_1)^{m_1}\cdot\E(\alpha_2)^{m_2}\cdot \dots
\cdot\E(\alpha_n)^{m_n}=\beta$ (with $m_j\in \DZ$ for all $j$)
implies
\[
\sum m_j \alpha_j =\delta
\]
for some $\delta$ with $\E(\delta)=\beta$. ($\delta$ is defined
only up to translation by elements of $\Ker$.)

If there is in fact such a relation, we can reduce the study of
$(\alpha_1, \dots, \alpha_n)$ (and $V$) to a case of smaller $n$.
Thus it makes sense to assume about $\overline \alpha:=(\alpha_1,
\dots, \alpha_n)$ that there are no such relations.

Following Zilber, we call these assumptions on the $\bar x$ and $\bar y$ coordinates
of $V$, \emph{free from additive dependencies}
and \emph{free from multiplicative dependencies}, respectively. If $V$ satisfies both conditions we just say it is \emph{free}.

The Schanuel condition yields another constraint on (generic
points of) $V$. Assuming that $V$ is free, we easily deduce from
SC that the dimension of $V$ is at least $n$. But more
is true. Let $M$ be an $r\times n$ matrix over $\DZ$, of rank $r$.
Then $M{\overline\alpha}^T$ is a $\DQ$-linearly independent
$r$-tuple. Consider the values of $\E$ on the elements of the
$r$-tuple. These are monomials (depending only on $M$) in the
$\E(\alpha_j)$ (the $y_j$ in effect). Then SC implies that the
transcendence degree of
\[
M{\overline \alpha}^T \cup \{\text{the corresponding $\E$'s}\}
\]
is greater than or equal to $r$.

If $V$ has this property of generic points then we say it is
\emph{rotund}. (Zilber used the terms \emph{normal} and
\emph{ex-normal}.)

Thus in order to understand types in exponential fields satisfying
SC, one is inevitably led to varieties which are \emph{rotund
and free}.

We are finally able to define strongly exponentially-algebraically
closed.
\begin{definition}
An exponential field $F$ is a \emph{strongly exponentially-algebraically closed} if, given any rotund and free $V$ and any finitely generated subfield $K$ of $F$ over which $V$ is defined, there is a point in $V(F)$ of the form $(\overline \alpha, \overline{\E(\alpha)})$ which is generic in $V$ over $K$.
\end{definition}

\subsection{Extending automorphisms}\label{extending automorphisms}


The deepest model theory in Zilber's work has to do with
quasiminimal excellence. To understand this one has to go beyond
\cite{Z}, and the material is bound to be hard for those who
are not specialists in pure model theory. The main results of our
paper can be quickly obtained using quasiminimal excellence, but
we also indicate how they can also be obtained without it.

The key structural property of Zilber fields can be summarized
as follows:
\begin{prop}
Suppose $F$ is a Zilber field with CCP, and $F_0$ is a
finitely-generated partial E-subfield of $F$ which contains $SK$,
such that $F_0 \strong F$. Then any automorphism of $F_0$ extends to
an automorphism of $F$. In particular, the statement holds for any
countable Zilber field $F$.
\end{prop}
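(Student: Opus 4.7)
The plan is to run a transfinite back-and-forth, building an increasing chain $F_0 \subseteq F_\alpha \strong F$ of strongly embedded partial E-subfields together with automorphisms $\sigma_\alpha$ of $F_\alpha$ extending $\sigma$, arranged so that every element of $F$ eventually lies in the domain of (and in the image of) some $\sigma_\beta$. At limit stages the union is again a strongly embedded partial E-subfield and the union of the $\sigma_\alpha$ is an automorphism of it, so the real work is at successor stages, where one alternates a ``forth'' step (bringing a new element of $F$ into the domain) with a ``back'' step (ensuring surjectivity onto $F$).

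The key one-step lemma is: given a countable $F_\alpha \strong F$ containing $SK$, a partial E-field automorphism $\sigma_\alpha$ of $F_\alpha$, and an element $a \in F$, there exist a countable $F_{\alpha+1}$ with $F_\alpha \cup \{a\} \subseteq F_{\alpha+1} \strong F$ and an extension $\sigma_{\alpha+1}$ of $\sigma_\alpha$ to an automorphism of $F_{\alpha+1}$. I would split on whether $a \in \ecl_F(F_\alpha)$. In the generic case $a \notin \ecl_F(F_\alpha)$: pick a finite $\DQ$-linearly independent tuple $\overline{\alpha}$ extending a basis for the span of $a$ modulo $D(F_\alpha)$, let $V$ be the algebraic locus of $(\overline{\alpha},\E(\overline{\alpha}))$ over $F_\alpha$, and use $F_\alpha \strong F$ together with the Schanuel condition to verify that $V$ is rotund and free. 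Transport $V$ across $\sigma_\alpha$ and apply strong exponential-algebraic closedness in $F$ to $\sigma_\alpha(V)$ to obtain a generic exponential point that fixes the image of $\overline{\alpha}$; close up under the partial exponential and field operations to obtain $F_{\alpha+1}$. In the algebraic case $a \in \ecl_F(F_\alpha)$: the isomorphism type of $a$ over $F_\alpha$ is pinned down by a minimal rotund, free variety together with countable auxiliary data (additive and multiplicative dependencies and a choice of generic point among the finitely many compatible realizations in $F$), so one transports this packet by $\sigma_\alpha$ and selects a matching realization $a'$ in $F$, again invoking strong exponential-algebraic closedness on the image side.

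The main obstacle is the exponentially algebraic case, where one must simultaneously match the isomorphism type of $a$ over $F_\alpha$ with that of the chosen $a'$ over $\sigma_\alpha(F_\alpha)$ and preserve the strong embedding $F_{\alpha+1} \strong F$. This is exactly the content of Zilber's quasiminimal excellence for these structures: types of ecl-points over strongly embedded partial E-subfields are controlled by the rotund and free variety data, so such matching is always possible. CCP enters to guarantee that $\ecl_F(F_\alpha)$ stays countable at each stage, so the back-and-forth can run for $|F|$ many steps without exhausting the target. The ``in particular'' clause is immediate, since any countable Zilber field trivially satisfies CCP, so the hypothesis is automatic.
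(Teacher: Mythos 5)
Your proposal unpacks the same mechanism the paper invokes rather than offering a different route: the paper's own ``proof'' is a two-sentence citation of the quasiminimal excellence of the class of Zilber fields together with Theorem~3.3 of \cite{K3} (plus the remark that the case needed for Theorem~2 is already in \cite[Theorem~7.2(2)]{K1}), and what you have written is the back-and-forth that those citations encapsulate. For countable $F$ your outline is essentially correct, and at the one genuinely hard point --- realizing over $\sigma_\alpha(F_\alpha)$ the type of an element of $\ecl_F(F_\alpha)$ while keeping the extension strong --- you defer to quasiminimal excellence, which is exactly what the paper does. So the approaches coincide; yours just makes the scaffolding visible.

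Two caveats are worth recording. First, a Zilber field with CCP need not be countable (the proposition is meant to apply to $\CB$, of cardinality $2^{\aleph_0}$), and your bookkeeping breaks down there: once the back-and-forth has run for $\omega_1$ steps the fields $F_\alpha$ are uncountable, your one-step lemma (stated for countable $F_\alpha$) no longer applies, and the claim that ``$\ecl_F(F_\alpha)$ stays countable at each stage'' is false. This is precisely where the \emph{excellence} part of the machinery is needed --- types over uncountable closed sets are controlled via independent systems of countable ones --- so CCP is not playing the role you assign to it; it is an axiom of the quasiminimal class, not the engine that keeps the stages small. (The paper itself sidesteps the issue in the proof of Theorem~2 by passing to a countable elementary Zilber subfield via the lemma of \ref{lowenheim}.) Second, and more minor: strong exponential-algebraic closedness only supplies points generic over \emph{finitely generated} subfields, so in your generic step you still owe an argument (a counting argument using CCP, or a strengthened form of the axiom) that a point of $\sigma_\alpha(V)$ generic over the whole countable field $\sigma_\alpha(F_\alpha)$ exists, and a check that adjoining it preserves strongness (this follows from rotundity plus genericity). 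Neither caveat is fatal --- they are exactly what the cited excellence results resolve --- but as written your sketch establishes the countable case and gestures at, rather than proves, the general one.
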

\begin{proof}[Sketch proof]
This follows from the quasiminimal excellence of the class of Zilber fields and Theorem~3.3 in \cite{K3}. Zilber uses a relational language whereas we use function symbols and the notion of partial exponential fields to give a simpler presentation. The notion of quasiminimal excellence depends critically on the language chosen, but one can translate from one language to the other to see that his proof does indeed work to prove our statement.
\end{proof}

We could avoid the use of excellence altogether. In the case where $F$ is countable and algebraic over $D(F) \cup \E(D(F))$, the proposition is a special case of part
of \cite[Theorem~7.2(2)]{K1}. This case is enough for our purposes.

\subsection{Countable subfields}\label{lowenheim}

\begin{lemma}
Let $F$ be any Zilber field, and let $X\subset F$ be a countable set.
Then there is a countable elementary subfield $F'\prec F$ such that $X\subset F'$ and $F'$
is also a Zilber field.
\end{lemma}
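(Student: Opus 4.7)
The plan is to apply the downward L\"owenheim--Skolem theorem in the language of exponential fields $\{+,\cdot,0,1,\E\}$ to obtain a countable elementary substructure $F'\prec F$ containing $X$, and then to check that $F'$ inherits from $F$ every property in the definition of a Zilber field. The underlying observation is that the class of Zilber fields is axiomatisable by a first-order schema, so that all of its defining conditions are preserved by elementary submodels.

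Most of the axioms are clearly first-order (or first-order schemas) and transfer immediately. Algebraic closedness, characteristic zero, and surjectivity of $\E$ are single sentences. The Schanuel condition is the schema asserting, for each $n$ and each non-zero $n$-tuple of integers $(m_1,\dots,m_n)$, that if the associated $\DQ$-linear relation on $\bar\alpha$ fails then $\td(\bar\alpha,\E(\bar\alpha)/\DQ)\ge n$. The kernel of $F'$ is again standard: the generator $\tau$ of $\Ker(F)$ is $\emptyset$-definable up to sign, hence $\tau\in F'$, and any $y\in F'$ with $\E(y)=1$ lies in $\Ker(F)=\tau\DZ\subseteq F'$.

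The only step requiring real care, and the main obstacle, is verifying that $F'$ is strongly exponentially-algebraically closed. I would address it by writing SEAC itself as a first-order schema. Rotundity of a subvariety $V\subseteq\DG_a^n\times\DG_m^n$ says that for every integer matrix $M$ of rank $r$ the image of $V$ under the corresponding map has dimension at least $r$, which is first-order in the parameters $\bar c$ defining $V$; freeness (no non-trivial integer additive relation on the $x$-coordinates and no non-trivial monomial relation on the $y$-coordinates generic on $V$) is likewise first-order in $\bar c$. For a free rotund $V$ of a fixed dimension $d$ defined over $K=\DQ(\bar c)$, the existence of a generic $(\bar\alpha,\E(\bar\alpha))\in V$ over $K$ amounts to the existence of such a point with $\td(\bar\alpha,\E(\bar\alpha)/K)\ge d$, the reverse inequality being automatic; this too is first-order in $\bar c$. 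Hence SEAC is expressible as a schema of first-order sentences, it holds in $F$, and so by elementarity it holds in $F'$. Combining everything, $F'$ is a countable Zilber field containing $X$, as required.
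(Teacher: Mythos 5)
There is a genuine gap, and it sits exactly at the point you flag as ``the only step requiring real care.'' Your claim that the existence of a \emph{generic} point of $V$ over $K=\DQ(\bar c)$ is first-order in $\bar c$ is false. A point $(\bar\alpha,\E(\bar\alpha))\in V$ is generic over $K$ if and only if it avoids \emph{every} proper $K$-subvariety of $V$; equivalently, $\td(\bar\alpha,\E(\bar\alpha)/K)\ge d$ is the negation of ``the point lies on some $K$-variety of dimension $<d$,'' which is an infinite conjunction of polynomial non-vanishing conditions, one for each such subvariety. The statement ``there exists a generic point'' is therefore an existential quantifier applied to an infinite conjunction --- the realization of a type, not the truth of a formula --- and such statements are not preserved by elementary substructures. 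Concretely, if $\bar c\in F'$, elementarity gives you, for each \emph{finite} family of proper $K$-subvarieties $W_1,\dots,W_k$, a point of $V(F')$ of the required form avoiding $W_1\cup\dots\cup W_k$; it does not give a single point of $V(F')$ avoiding all of them simultaneously. Indeed it is a well-known feature of this subject that the class of Zilber fields is \emph{not} an elementary class (the axioms live in $L_{\omega_1,\omega}(Q)$), so any argument resting on ``the defining conditions form a first-order schema'' cannot work. (The remaining verifications in your proposal --- characteristic zero, algebraic closedness, surjectivity, the Schanuel condition being inherited by subfields, and the kernel of $F'$ being exactly $\tau\DZ$ via the definability of $\{\tau,-\tau\}$ --- are fine, though note that SC is preserved because it is a universal-type condition on tuples, not because it is literally first-order.)

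The paper repairs exactly this defect by a chain construction: build $F_0\prec F_1\prec\dots\prec F$, all countable, where $F_0$ is any countable elementary substructure containing $X$, and $F_{n+1}$ is obtained from $F_n$ by adjoining, for each free rotund variety $V$ defined over a finitely generated subfield of $F_n$, a point of $V(F)$ of the form $(\bar\alpha,\E(\bar\alpha))$ generic over $F_n$ (such a point exists in $F$ because $F$ is strongly exponentially-algebraically closed), and then closing off to a countable elementary substructure of $F$. The union $F'=\bigcup_n F_n$ is then a countable elementary substructure of $F$, and any free rotund $V$ over a finitely generated $K\subset F'$ is already defined over some $F_n$, so the generic point added at stage $n+1$ witnesses strong exponential-algebraic closedness for $V$ over $K$. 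If you want to salvage your write-up, this back-and-forth between ``elementary substructure'' and ``add generic witnesses'' is the missing ingredient; the downward L\"owenheim--Skolem theorem alone does not suffice.
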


\begin{proof}
The result follows from Theorem~2 and section~5 of \cite{K4}. We sketch a simpler proof.

Without the requirement that $F'$ is a Zilber field, the result would follow immediately from the downward L\"owenheim-Skolem theorem for a countable theory. In order to obtain a Zilber field
we may need to add generic solutions to the free and rotund algebraic varieties. The idea is to construct a chain of structures, each an elementary substructure of $F$ and each of which contains the previous field and has algebraically generic realizations for the rotund and free varieties defined with parameters over the previous field. This
is a routine process. After constructing such chain of fields, one can define $F'$ to be the union, which will have all the necessary properties.
\end{proof}

\subsection{Proof of Theorem~2}\label{proof}

We begin by proving the theorem assuming we are in the countable
case and then use this to complete the general result. The
countable case will be proved by an automorphism argument.

First suppose that $F$ is a countable Zilber field (or more
generally, a Zilber field with CCP). We define an automorphism
$\sigma_1$ of $SK$ by defining $\sigma_1(2 \pi i) = -2\pi i$. Note
that this defines a unique automorphism, which restricts to
$\sigma_0$ on $\Qab$. Since $SK \strong F$,
Proposition~\ref{extending automorphisms} allows us to extend
$\sigma_1$ to an automorphism of $F$. Now if $\alpha \in \Qab
\smallsetminus \QabR$ then $\sigma_0(\alpha) \neq \alpha$, so
$\alpha$ is not pointwise definable in $K$.

Now let $\alpha \in \Qalg \smallsetminus \Qab$. Let $F_0 = SK(\alpha)$, with $D(F_0) = D(SK)$.
Then, since $\alpha$ is algebraic over $SK$ but not in $SK$, there is an automorphism $\sigma_2$
of $F_0$ which fixes $SK$ pointwise, but does not fix $\alpha$. Since $F_0$ is an algebraic
extension of $SK$ and the domain of exponentiation does not extend, the property $SK \strong F$
implies immediately that $F_0 \strong F$. Thus $\sigma_2$ extends to an automorphism of $F$, and
$\alpha$ is not pointwise definable in $F$.

If $F$ is an uncountable Zilber field and $\alpha\not\in \QabR$, Lemma~\ref{lowenheim} above
shows that there is a countable Zilber field $F'$ containing $\alpha$ and elementarily embedded in $F$.
We have shown that $\alpha$ is not definable in $F'$ which implies that $\alpha$ is not definable in $F$. That completes the proof of Theorem~2. \qed

\subsection{Orbits and definable points}
When $F$ is a Zilber field with CCP, we have shown that an algebraic number is in $\QabR$ if and only if its orbit under automorphisms of $F$ is a singleton.

\subsection{Other exponential fields}\label{otherfields}

The proof of Theorem~2 uses only that $SK$ admits the automorphism
$\sigma_1$, and that $F$ is built on top of it in such a
homogeneous way that the Proposition~\ref{extending automorphisms}
holds. For any non-zero algebraic number $\tau$, we can construct
a partial exponential field $CK_\tau$ which is like $SK$, but with
this $\tau$ as the generator of a cyclic kernel in place of the
usual transcendental generator. Then we can construct a strongly exponentially-algebraically
closed exponential field $\CB_\tau$, analogous to $\CB$ but with $CK_\tau$ in place of $SK$.
In this case there are two possibilities for what the definable algebraic numbers are.
Let $f$ be the minimal polynomial of $\tau$ over $\Qab$, and let $\bar{f}$ be the polynomial
obtained from it by applying the automorphism $\sigma_0$ of $\Qab$ to its coefficients.
If $\bar{f}(-\tau) = 0$ (for example, if $\tau = i$) then $\sigma_0$ extends to an involution
on the partial exponential field $CK_\tau$, and the definable algebraic numbers in $\CB_\tau$
are those in the fixed field of that involution. Otherwise (for example, if $\tau = 1$),
$CK_\tau$ has no non-trivial automorphisms, and the definable algebraic numbers are precisely
the elements of $CK_\tau$, that is, of $\Qab(\tau)$.

Similarly, one can build exponential fields on $SK$ (or on $CK_\tau$)
which are not strongly exponentially-algebraically closed, but
still have the required homogeneity properties for the proof of Theorem~2 to go
through, such as $SK^{EA}$, the free completion of $SK$ to an algebraically
closed exponential field, and $SK^{ELA}$, the free completion to an
algebraically closed exponential field with logarithms. See \cite{K1} for details of all these constructions.

\subsection{Extending involutions}

Although the involution $\sigma_1$ on $SK$ extends to some
automorphism of $\CB$, the extension is totally non-canonical, and
the question of whether it can be extended to an involution on
$\CB$ is open and appears to be very difficult.

\bibliographystyle{abbrv}
\bibliography{AlgebraicExponential}

\end{document}